\theoremstyle{definition}
\def\fnum{equation}
\newtheorem{Thm}[\fnum]{Theorem}
\newtheorem{Lem}[\fnum]{Lemma}
\newtheorem{Def}[\fnum]{Definition}
\newtheorem{Pro}[\fnum]{Proposition}
\numberwithin{equation}{section}
\newcommand{\dist}{{\text {dist}}}
\def\RR{{\bold R}}
\def\SS{{\bold S}}
\newcommand{\e}{{\text {e}}}
\newcommand{\Length}{{\text {Length}}}
\newcommand{\cC}{{\mathcal{C}}}
\newcommand{\cO}{{\mathcal{O}}}
\newcommand{\eqr}[1]{(\ref{#1})}
\title{Quantitative uniqueness for mean curvature flow}
\author[]{Tobias Holck Colding}%
\address{MIT, Dept. of Math.\\
77 Massachusetts Avenue, Cambridge, MA 02139-4307.}
\author[]{William P. Minicozzi II}%
\thanks{The  authors
were partially supported by NSF  DMS Grants   2405393 and 2304684.}
\email{colding@math.mit.edu  and minicozz@math.mit.edu}
\begin{document}

\maketitle

{\centering\footnotesize To our friend Gang Tian.\par}

\begin{abstract}
We show how to use the arguments of \cite{CM2} to get a stronger effective version of uniqueness of blowups that has a number of consequences.  
\end{abstract}

\section{Introduction}

There is a natural   scaling for a mean curvature flow   (MCF)  $M_s \subset \RR^{n+1}$, where space and time dilate parabolically.
A general limit flow at a space-time point $(\bar{x} , \bar{s})$ is a limit of a sequence of rescalings  $\frac{1}{\mu_i} \, \left( M_{s_i - \mu_i^2 \, s} - x_i \right)$ centered at a sequence of points
$(x_i , s_i) \to  (\bar{x} , \bar{s})$ with $\mu_i \to 0$.   Typically, the time-slices of the limit are non-compact and the convergence is on compact sets.
When the dilations are all centered at the same point, then the limit flow is called a tangent flow.

  A tangent flow  at the origin in space-time is the limit of 
a sequence of rescaled flows $\frac{1}{\delta_i} \, M_{\delta_i^2 \, t}$ where $\delta_i \to 0$.
By a monotonicity formula of Huisken, \cite{H}, and an argument of Ilmanen and
White, \cite{I,W2}, tangent flows are   shrinkers, i.e., self-similar solutions of 
MCF that evolve by rescaling. A priori, different sequences $\delta_i$ could give different tangent flows
and the question of the uniqueness of the blowup is whether it is independent of the sequence.  
Uniqueness has strong implications for   regularity, \cite{CM4}, cf. \cite{W3,W4}.

A singular point is {\emph{cylindrical}} if some tangent flow is a multiplicity one cylinder $\SS^k \times \RR^{n-k}$; 
  \cite{CM2} proved that   cylindrical blowups are   unique.  The main tool was a  Lojasiewicz-type inequality that led to a rate of 
decay for the gaussian area and  a rate of convergence to the limit.  This was inspired by the way that  Lojasiewicz proved  uniqueness  for finite dimensional analytic gradient flows, \cite{L}.   The formal similarities  to  \cite{L} helped frame the problem.  However,  those methods did not  apply
for a number of reasons, including the non-compactness of the time-slices. Instead  new ideas and techniques were  required.

\vskip1mm
The next theorem will illustrate why  effective uniqueness is useful.
  Suppose that for $s \in [-1,1]$, 
    $\lambda (M_{s}) \leq \lambda_0$ and $M_s$ satisfies:
\begin{enumerate}
\item[(A)] The origin $(0,0)$ is a cylindrical singularity with blow up $\cC=\SS^k_{\sqrt{2k}} \times \RR^{n-k}$.
\item[(B)] There are sequences $x_i \to 0$, $s_i \to 0$ and $\mu_i \to 0$ so that 
\begin{align}
	\frac{1}{\mu_i} \, \left( M_{s_i - \mu_i^2} - x_i \right) \notag
\end{align}
converges smoothly with multiplicity one to   $\cO(\cC)$, where $\cO$ is a rotation in $\RR^{n+1}$.
\end{enumerate}
We will see that $\cO (\cC) = \cC$, i.e., the two cylinders are the same:

\begin{Thm}	\label{t:AB}
If (A) and (B) hold, then $\cO (\cC) = \cC$.
\end{Thm}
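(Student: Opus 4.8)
The plan is to show that the only freedom allowed in the rotation $\cO$ is one that fixes the cylinder, i.e. that $\cO$ preserves the axis $V_0 := \{0\}\times\RR^{n-k}$ of $\cC$. Since two cylinders $\SS^k_{\sqrt{2k}}\times\RR^{n-k}$ of the same dimension and radius coincide precisely when their axes agree as subspaces, it suffices to prove $\cO(V_0)=V_0$. I would phrase the entire argument in terms of the axis of the best fit cylinder, so that the statement becomes one about how this axis depends on the center and scale of the rescaling.

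The engine is the effective uniqueness coming from the Lojasiewicz inequality of \cite{CM2}. By hypothesis (A) the tangent flow at $(0,0)$ is the fixed cylinder $\cC$ with axis $V_0$, and the origin-centered rescalings $\frac{1}{\delta}\,M_{-\delta^2}$ converge to $\cC$ as $\delta\to 0$. The quantitative form of this convergence is what I would extract first: there are $r_0>0$ and a modulus $\eta(r)\to 0$ as $r\to 0$ so that, on the relevant scales $r<r_0$ near the origin, $M$ is graphical over $\cC$ with $C^2$-norm at most $\eta(r)$, and the axis $V(r)$ of the best fit cylinder at scale $r$ satisfies $|V(r)-V_0|\le\eta(r)$. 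The crucial point, which is exactly the strengthening provided by the arguments of \cite{CM2}, is that this bound should be uniform in the center over a space-time neighborhood of the origin: for any $(x,s)$ with $|x|+|s|^{1/2}$ small and any admissible scale $\mu$, the best fit axis $V(x,s,\mu)$ lies within $\eta(|x|+|s|^{1/2}+\mu)$ of $V_0$.

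With this in hand I would turn to (B). The rescaled time slices $\frac{1}{\mu_i}\,(M_{s_i-\mu_i^2}-x_i)$ are precisely the time $-1$ slices of rescalings centered at $(x_i,s_i)$ at scale $\mu_i$, and their smooth multiplicity one limit $\cO(\cC)$ is a cylinder with axis $\cO(V_0)$. Hence $\cO(V_0)$ is the limit of the best fit axes $V(x_i,s_i,\mu_i)$. There is one geometric point worth recording: because the limit is a genuine cylinder of radius $\sqrt{2k}$ rather than a hyperplane, the displacement of the center relative to the scale, measured in the rescaled frame, cannot have an unbounded component transverse to $V_0$ — rescaling a sphere factor of fixed radius $\sqrt{2k}$ down to scale $\mu_i\to 0$ away from the axis would flatten it to $\RR^n$, contradicting (B). Thus the displacement is asymptotically along $V_0$, along which translation preserves the axis. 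Feeding $x_i\to 0$, $s_i\to 0$, $\mu_i\to 0$ into the uniform estimate of the previous paragraph then gives $|V(x_i,s_i,\mu_i)-V_0|\le\eta(|x_i|+|s_i|^{1/2}+\mu_i)\to 0$, so that $\cO(V_0)=\lim_i V(x_i,s_i,\mu_i)=V_0$ and therefore $\cO(\cC)=\cC$.

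The main obstacle is the off-center control. Uniqueness of the tangent flow at the origin by itself only constrains rescalings centered at the origin; here the centers $(x_i,s_i)$ move, and a priori $\mu_i$ can be small compared with $|x_i|+|s_i|^{1/2}$, so the general limit flow need not be a mere translate of the origin tangent flow. Upgrading the origin-centered decay of \cite{CM2} to an estimate that is uniform over nearby centers and all small scales, and simultaneously ruling out a transverse tilt of the axis in the regime where the center displacement dominates the scale, is the crux. This is exactly the stronger effective version advertised in the abstract, and it is what prevents the best fit axis from accumulating a nonzero rotation in the limit.
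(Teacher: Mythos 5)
Your reduction of the theorem to the statement $\cO(V_0)=V_0$, and your diagnosis that the difficulty is off-center control, are both correct; but the proposal has a genuine gap: the uniform estimate $|V(x,s,\mu)-V_0|\le\eta(|x|+|s|^{1/2}+\mu)$, which carries the entire content of the theorem, is asserted rather than proven. You attribute it to ``the strengthening provided by the arguments of \cite{CM2},'' but \cite{CM2} gives uniqueness of tangent flows with a \emph{fixed} center; it says nothing uniform about rescalings centered at nearby points $(x,s)\neq(0,0)$, and in your last paragraph you concede that upgrading the origin-centered decay to such a uniform statement ``is the crux.'' In other words, the key step of the proof has been restated as a hypothesis. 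No compactness or continuity argument can supply such a modulus $\eta$: for fixed center displacement the admissible scales $\mu$ go all the way to zero, so the number of dyadic scales to be bridged between $|x|+|s|^{1/2}$ and $\mu$ is unbounded, and any per-scale error would be accumulated infinitely many times. This unbounded stretch (equivalently, the unbounded time interval for the rescaled flow) is exactly what forces the Lojasiewicz machinery, and it is also why your side argument about transverse displacement of the centers, while plausible, does not substitute for it.

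What closes the gap is Theorem \ref{t:close}, which is stated in the paper before Theorem \ref{t:AB} and is available to you. For each $i$ consider the rescaled MCF centered at $(x_i,s_i)$, namely $\Sigma^i_t = e^{t/2}\left( M_{s_i - e^{-t}} - x_i \right)$, and set $T_i = -2\log\mu_i \to \infty$, so that $\Sigma^i_{T_i}$ is precisely the surface in (B). Hypothesis (1) of Theorem \ref{t:close} holds at a fixed time $t_0$, uniformly for large $i$, because $x_i \to 0$, $s_i \to 0$ and (A) gives convergence of the origin-centered rescalings to $\cC$; hypothesis (2) holds at the far endpoint $t_2 = T_i$ because the smooth multiplicity one convergence in (B) gives $F(\Sigma^i_{T_i}) \to F(\cC)$, with monotonicity of $F$ under the rescaled flow controlling the intermediate times. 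The conclusion of Theorem \ref{t:close}, whose constants are independent of $T_i$, then forces $\Sigma^i_{T_i}$ to be arbitrarily close to $\cC$ for large $i$, whence $\cO(\cC)=\cC$; the best-fit-axis bookkeeping becomes unnecessary. If you replace your asserted uniform modulus by this verification of the hypotheses of Theorem \ref{t:close} along the flows $\Sigma^i_t$, your outline becomes the paper's proof.
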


If the sequence in (B) was centered at the origin in space-time (i.e., if $x_i =0$ and $s_i = 0$), then this is uniqueness of 
cylindrical blow ups from \cite{CM2}.  The more general case, where the centers of the rescalings    converge to the origin, will follow from 
an effective uniqueness theorem; see Theorem \ref{t:close} below.   

\vskip1mm
It is cleanest to state the effective uniqueness for solutions of the rescaled MCF.   The rescaled MCF is the gradient flow for the $F$-functional 
or Gaussian surface
area 
\begin{align}
  F (\Sigma) = (4\pi)^{-n/2} \, \int_{\Sigma} \,
  \e^{-\frac{|x|^2}{4}} \, d\mu \, .
\end{align}
The entropy, \cite{CM1}, is the supremum of the Gaussian surface areas
 over all centers and scales
  \begin{align}
  	\lambda (\Sigma) =\sup_{c >0 , x_0\in \RR^{n+1}} F (x_0 + c\, \Sigma) \, .
\end{align}

\begin{Def}
We will say that $\dist_R (\Sigma , \Gamma) < \epsilon$ if 
  $B_R \cap \Sigma$ can be written as a $C^{2,\alpha}$ graph over (a subset of) $\Gamma$ of a function with $C^{2,\alpha}$ norm less than $\epsilon$.
\end{Def}

The next theorem shows that if a rescaled MCF starts off close to a cylinder and  $F$  does not decrease much,  then the flow  does not change much. 
They key point is that this is independent of the time flowed.  In the theorem, $\Sigma_t$ is an $n$-dimensional rescaled MCF  with entropy $\lambda (\Sigma_{t}) \leq \lambda_0$ and all constants  are allowed to depend on $n$ and $\lambda_0$.

\begin{Thm}	\label{t:close}
There exist $c, \alpha, \epsilon_1 , \epsilon_2> 0$ and  $R_1 , R_2>2n$ so that if $\Sigma_t$ is defined on   $ [t_1 , t_2]$,
\begin{enumerate}
\item $\dist_{R_1} ( \Sigma_t, \cC) < \epsilon_1 $ for $t \in [t_1 , t_1+2] $, and 
\item $|F(\Sigma_{t_i}) - F(\cC)|  < \epsilon_2$ for $i=1,2$, 
\end{enumerate}
then  $\dist_{R_2} (\Sigma_t ,\Sigma_{t_1+1}) < c \, |F(\Sigma_{t_1}) - F(\cC)|^{\alpha} + c \, |F(\Sigma_{t_2}) - F(\cC)|^{\alpha}$ for $t\in [t_1+1 , t_2]$.
\end{Thm}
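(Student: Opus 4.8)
The plan is to run the Lojasiewicz argument for the $F$-gradient flow, taking the Lojasiewicz inequality of \cite{CM2} as the main analytic input and then upgrading the resulting weighted $L^2$ bound to the $C^{2,\alpha}$ statement by parabolic regularity. Write $\phi = H - \frac{\langle x , \nu \rangle}{2}$ for the speed of the rescaled flow, which is the $L^2$ gradient of $F$, and let $\|\phi\|^2 = (4\pi)^{-n/2} \int_{\Sigma_t} \phi^2 \, \e^{-\frac{|x|^2}{4}} \, d\mu$ be the Gaussian-weighted $L^2$ norm. The two infinitesimal facts are the energy identity and that the weighted length bounds displacement, namely
\[
	\frac{d}{dt} F(\Sigma_t) = - \|\phi\|^2 \qquad \text{and} \qquad \dist_{L^2}(\Sigma_t , \Sigma_{t_1+1}) \leq \int_{t_1+1}^{t} \|\phi\| \, ds \quad (t_1 + 1 \leq t \leq t_2) .
\]
The third ingredient is the Lojasiewicz inequality from \cite{CM2}: there are $\theta \in (0, \frac{1}{2}]$ and $C$ so that whenever $\dist_{R_1}(\Sigma , \cC)$ is small enough one has
\[
	|F(\Sigma) - F(\cC)|^{1-\theta} \leq C \, \|\phi\| .
\]

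The core estimate is the usual Lojasiewicz ODE argument, but run symmetrically in the two endpoints, and this symmetry is exactly what produces the two terms on the right-hand side. Set $W(t) = F(\Sigma_t) - F(\cC)$; by the energy identity $W$ is nonincreasing, but since $\cC$ is an unstable critical point $W$ need not keep a fixed sign. On any subinterval where $W \geq 0$ one has $-\frac{d}{dt} W^\theta = \theta \, W^{\theta - 1} \|\phi\|^2 \geq \frac{\theta}{C} \|\phi\|$, while on any subinterval where $W \leq 0$ one has $\frac{d}{dt} |W|^\theta \geq \frac{\theta}{C} \|\phi\|$. Splitting $[t_1+1 , t_2]$ at the (at most one) sign change of the monotone function $W$ and integrating each piece gives
\[
	\int_{t_1+1}^{t_2} \|\phi\| \, ds \leq \frac{C}{\theta} \left( |W(t_1+1)|^\theta + |W(t_2)|^\theta \right) \leq \frac{C}{\theta} \left( |F(\Sigma_{t_1}) - F(\cC)|^\theta + |F(\Sigma_{t_2}) - F(\cC)|^\theta \right) ,
\]
the last inequality using monotonicity of $F$ to trade the value at $t_1+1$ for the value at $t_1$. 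Together with the displacement bound this is the theorem with $\alpha = \theta$, measured in weighted $L^2$.

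Two points force one to set the hypotheses up with care, and they are where the real difficulty sits. The Lojasiewicz inequality holds only while the flow stays close to $\cC$, yet closeness is assumed only on $[t_1 , t_1 + 2]$; so the estimate must be run as a continuity argument. The smallness of $\epsilon_1$ and $\epsilon_2$ makes the length integral above small, the displacement bound then keeps $\Sigma_t$ inside the closeness threshold of $\cC$ for all later $t$, and this justifies applying the inequality on all of $[t_1 , t_2]$. Closing this bootstrap—quantitatively turning the $L^2$ length bound into a statement that the flow stays $\dist_{R_1}$-close—is the main obstacle, and it is precisely the noncompact phenomenon that made \cite{CM2} delicate: the cylinder has unbounded ends $\RR^{n-k}$, so one must rule out the graph drifting off along them even where the Gaussian weight is tiny. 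The second point is the passage from weighted $L^2$ to the $C^{2,\alpha}$ distance $\dist_{R_2}$ on a fixed ball. Here I would invoke interior parabolic Schauder estimates for the quasilinear rescaled flow: on $B_{R_2}$ the weight is bounded below, so weighted $L^2$-smallness gives ordinary $L^2$-smallness, and closeness on a full unit time interval before the base time lets one bootstrap up to $C^{2,\alpha}$. This regularity margin is why the reference slice is $\Sigma_{t_1+1}$ and not $\Sigma_{t_1}$, and why closeness is assumed on the two-unit interval $[t_1 , t_1+2]$ rather than at the initial time alone.
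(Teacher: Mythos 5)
Your overall architecture --- a Lojasiewicz inequality producing a length bound for the flow, a continuity argument to keep the flow within the range of validity of that inequality, and parabolic estimates to upgrade weighted $L^2$ displacement to $C^{2,\alpha}$ closeness on a fixed ball --- matches the paper's, and your final paragraph correctly identifies the two pressure points (they correspond to Lemma \ref{l:promote} and the maximal-$N$ continuity argument in the paper's proof). The genuine gap is your ``third ingredient.'' The inequality you attribute to \cite{CM2},
\[
	|F(\Sigma)-F(\cC)|^{1-\theta} \leq C \, \|\phi\| \, ,
\]
valid for any single hypersurface that is $\dist_{R_1}$-close to $\cC$, is not what \cite{CM2} provides. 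What is available, and what the paper uses as Theorem \ref{t:gradMCF}, is a \emph{discrete} inequality along the flow: $|F(\Sigma_t)-F(\cC)|^{1+\tau} \leq C \left( F(\Sigma_{t-1}) - F(\Sigma_{t+1}) \right)$, where the right-hand side is the drop of $F$ over a time-two interval, not a pointwise gradient norm. The fixed-time Lojasiewicz inequalities in \cite{CM2} carry error terms coming from outside the graphical region --- exactly the noncompactness you flag --- and removing them requires flowing for a definite time, which is what forces the discrete form. Since the pointwise inequality is unavailable, your core computation (differentiating $W^{\theta}$ and integrating the resulting ODE) has no valid starting point; the argument must be recast with differences in place of derivatives. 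That is precisely the role of the paper's Proposition \ref{p:tech}, which is a summation-by-parts analogue of your ODE integration, applied to $x_j = F(\Sigma_{t_1 + 2j-1}) - F(\cC)$.

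The discreteness also breaks the symmetry you invoke when $W$ changes sign. In the continuous setting $\frac{d}{dt} |W|^{\theta} \geq \frac{\theta}{C} \|\phi\|$ holds on either side of the sign change, so splitting there is harmless. In the discrete setting, when $F(\Sigma_t) < F(\cC)$ the inequality of Theorem \ref{t:gradMCF} bounds $|y_j|^{1+\tau}$ by $C(y_j - y_{j+1})$ --- the \emph{earlier} term rather than the later one, as in \eqr{e:herett2} --- and one cannot reduce this case to the other by reversing time, since time-reversal destroys MCF regularity (this reduction works in the finite-dimensional model, but the paper explicitly notes it fails here). The paper instead reindexes the sequence backwards, setting $x_j = -y_{N-j}$, a purely combinatorial device with no counterpart in your continuous argument. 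So: keep your skeleton, but replace the ODE step by the discrete Proposition \ref{p:tech}, and handle the regime $F < F(\cC)$ by the reversed-sequence trick rather than by symmetry.
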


\vskip1mm
 The proof of Theorem \ref{t:close}  gives a   stronger notion of closeness and a bound for the distance traveled, but  the statement of Theorem \ref{t:close} 
 suffices for  applications.  We will use the theorem  to prove  Theorem \ref{t:AB}. It also implies the uniqueness of cylindrical blow down  limits for ancient mean curvature flows.

\vskip2mm
We are grateful to Brian White for raising the question that is proven in Theorem \ref{t:AB}.  

\section{Model case}

We will illustrate the ideas behind the effective uniqueness results   in the  finite dimensional model case of the gradient flow of an analytic function. 

Suppose that $F$ is a function on $\RR^n$ with $\nabla F (0) = 0$ and $F$ satisfies the gradient Lojasiewicz inequality, \cite{L}, 
\begin{align}	\label{e:gL}
	\left| F(x) -F(0) \right|^{1 + \tau} \leq |\nabla F(x)|^2 
\end{align}
for some $\tau \in (1/3, 1)$ and all $x \in B_2$.

Let  $\gamma (t)$ be  a gradient flow line for $F$, so that $\gamma'(t) = - \nabla F\circ \gamma (t)$ and 
\begin{align}	\label{e:gLhere}
	\partial_t \, F(\gamma(t)) = - |\nabla F|^2 (\gamma(t)) \, .
\end{align}
Lojasiewicz used \eqr{e:gL} to prove that if $\gamma$ has $0$ as a limit point, then $\gamma$ has finite length and converges to $0$ as $t \to \infty$; this is 
the Lojasiewicz uniqueness theorem, \cite{L}.

The next proposition shows that if a subsegment of $\gamma$ starts and ends near the critical point, then it's  length has a fixed upper bound that is independent of the time that 
$\gamma$ flows.  The Lojasiewicz uniqueness theorem is a corollary, but the proposition applies   more generally, including    where $F \circ \gamma$ goes just a bit below $F(0)$.

\begin{Pro}	\label{p:model}
There exist  $c , \alpha , \epsilon > 0$ so that if $\gamma$ is defined on $[t_1,t_2]$,  
\begin{align}
	\gamma (t_i) \subset B_{\frac{1}{4}} {\text{ and }}  |F(0) - F(\gamma(t_i))| < \epsilon \, , 
\end{align}
then $\gamma$   has length at most $c\, |F(\gamma(t_1)) - F(0)|^{\alpha}  + c\, | F(0) - F(\gamma(t_2))|^{\alpha} < \frac{1}{2}$. 
\end{Pro}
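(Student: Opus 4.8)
The plan is to run the classical Lojasiewicz telescoping argument, but arranged so that it survives the flow dipping below the critical value $F(0)$ and so that the trajectory is confined to the region $B_2$ where \eqr{e:gL} is available. First I would set $g(t) = F(\gamma(t)) - F(0)$ and record the two facts coming from \eqr{e:gLhere}: the speed is $|\gamma'(t)| = |\nabla F(\gamma(t))|$, and $g'(t) = -|\nabla F|^2 \leq 0$, so $g$ is monotone nonincreasing. Monotonicity is what makes the two-sided hypothesis usable: since $g(t_2) \leq g(t) \leq g(t_1)$ and $|g(t_i)| < \epsilon$, we get $|g(t)| < \epsilon$ for every $t \in [t_1,t_2]$ for free. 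I would also rewrite \eqr{e:gL} as $|g|^{\theta} \leq |\nabla F|$ with $\theta = (1+\tau)/2 \in (2/3,1)$.

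The core estimate is an antiderivative computation, carried out separately on the parts where $g$ has a fixed sign. On an interval where $g > 0$, differentiating $g^{1-\theta}$ and using $g^{\theta} \leq |\nabla F|$ gives $\frac{d}{dt} g^{1-\theta} = -(1-\theta)\, g^{-\theta}|\nabla F|^2 \leq -(1-\theta)|\nabla F| = -(1-\theta)|\gamma'|$; on an interval where $g < 0$, the same computation applied to $(-g)^{1-\theta}$ gives $\frac{d}{dt}(-g)^{1-\theta} \geq (1-\theta)|\gamma'|$. By monotonicity there is a single crossing time $t_0$ with $g \geq 0$ on $[t_1,t_0]$ and $g \leq 0$ on $[t_0,t_2]$ (and $|\gamma'| = 0$ wherever $g = 0$). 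Integrating the first inequality over $[t_1,t_0]$ and the second over $[t_0,t_2]$, the boundary terms at $t_0$ vanish and I obtain
\begin{align}
\int_{t_1}^{t_2} |\gamma'| \, dt \leq \frac{1}{1-\theta}\left( |g(t_1)|^{1-\theta} + |g(t_2)|^{1-\theta}\right) , \notag
\end{align}
which is precisely the claimed length bound with $\alpha = 1-\theta$ and $c = (1-\theta)^{-1}$.

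This computation is only legitimate while $\gamma$ stays inside $B_2$, so the last point is a confinement (continuity) argument. I would let $T$ be the largest time for which $\gamma([t_1,T]) \subset B_{1/2}$. For $t < T$ the estimate above applies on $[t_1,t]$, and since $|g| < \epsilon$ throughout it bounds the displacement by $|\gamma(t) - \gamma(t_1)| \leq \frac{2}{1-\theta}\,\epsilon^{1-\theta}$. Choosing $\epsilon$ small enough that this is below $\frac14$ and using $\gamma(t_1) \in B_{1/4}$ forces $\gamma(t)$ to stay strictly inside $B_{1/2}$, so $T$ cannot be an interior escape time and the flow remains in $B_{1/2} \subset B_2$ on all of $[t_1,t_2]$. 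The same choice of $\epsilon$ makes the final length bound smaller than $\frac12$.

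I expect the main obstacle to be the interplay between the two features that distinguish this from the textbook Lojasiewicz statement: allowing $g$ to pass below $0$ (handled by the signed splitting at $t_0$ instead of the usual standing assumption $F(\gamma(t)) \geq F(0)$), and keeping the trajectory inside the region where \eqr{e:gL} holds without knowing in advance that the length is small. The resolution is that the length estimate and the confinement feed each other through a bootstrap, and both are driven by the single smallness parameter $\epsilon$ once one observes that monotonicity of $g$ propagates the endpoint smallness to the entire interval.
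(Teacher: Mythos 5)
Your proof is correct, but it follows a genuinely different route from the paper's. You run the classical Lojasiewicz desingularization: with $\theta = (1+\tau)/2$, the pointwise inequality $\frac{d}{dt}\, g^{1-\theta} \leq -(1-\theta)\, |\gamma'|$ (and its mirror for $(-g)^{1-\theta}$) turns the length into a telescoping boundary term, with a single signed splitting at the crossing time; your observation that monotonicity of $F\circ\gamma$ propagates the endpoint smallness $|g(t_i)|<\epsilon$ to the whole interval is also a clean way to set up the confinement bootstrap. The paper instead proves Lemma \ref{l:above} by integrating the ODE $f^{1+\tau}\leq -f'$ to get the polynomial decay \eqr{e:ftau}, and then bounds the length via a weighted Cauchy--Schwarz inequality with weight $(1+t)^{1+\delta}$ plus integration by parts; the regime below $F(0)$ is handled by time reversal ($\tilde\gamma(t)=\gamma(-t)$ is the gradient flow of $-F$), and the crossing case by splitting at $t'$ with $F(\gamma(t'))=F(0)$. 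Your argument is shorter and more elementary, with explicit constants $\alpha = 1-\theta$, $c=(1-\theta)^{-1}$. What the paper's heavier route buys is discretizability: its proof only consumes $F$-differences across unit time intervals, yielding the stronger bound \eqr{e:slightly} on $\sum_i [F(\gamma(i))-F(\gamma(i+1))]^{1/2}$ (which dominates the length by \eqr{e:Lgamma}, so your length bound does not imply it), and this is precisely the structure that survives in the MCF setting, where the Lojasiewicz inequality of Theorem \ref{t:gradMCF} is a difference inequality rather than a gradient inequality, your pointwise division by $|\nabla F|$ has no analogue, and time reversal is forbidden --- there the paper's argument becomes Proposition \ref{p:tech}, whereas your argument has no direct discrete counterpart. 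One very minor imprecision: your parenthetical claim that $|\gamma'|=0$ wherever $g=0$ fails at an isolated transversal zero of $g$; what is actually needed (and true) is that the zero set of $g$ is a closed interval on whose interior $g'=-|\gamma'|^2\equiv 0$, so it contributes no length and the boundary terms at the splitting vanish.
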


\vskip2mm
 Proposition \ref{p:model} is stated as a length bound since that is easy to understand and  clearly implies the uniqueness theorem.  The argument 
 actually gives the slightly stronger bound
 \begin{align}	\label{e:slightly}
 	\sum_i \left[ F(\gamma (i)) - F(\gamma(i+1)) \right]^{\frac{1}{2}} \leq c\, |F(\gamma(t_1)) - F(0)|^{\alpha}  + c\, | F(0) - F(\gamma(t_2))|^{\alpha}  \, .
 \end{align}
 The reason this is stronger is because the fundamental theorem of calculus and the Cauchy-Schwarz inequality give that
 \begin{align}	\label{e:Lgamma}
 	\left( \int_i^{i+1} |\gamma_t| \right)^2 &= \left( \int_i^{i+1}  \sqrt{ - \partial_t \, (F  \circ \gamma)} \right)^2 \leq - \int_i^{i+1}  \partial_t \, (F  \circ \gamma) \notag \\
	&=
	F(\gamma (i)) - F(\gamma(i+1) \, ,  
 \end{align}
 so the length of the segment of $\gamma$ from $t=i$ to $t=i+1$ is at most $\left( F(\gamma (i)) - F(\gamma(i+1) \right)^{ \frac{1}{2}}$.

\subsection{The proof in the model case}

The next lemma proves the proposition in the special case where $F\circ \gamma$ stays above $F(0)$.  

\begin{Lem}	\label{l:above}
There exist  $c , \alpha , \epsilon > 0$ so that if $\gamma$ is defined on $[t_1,t_2]$, $\gamma(t_1) \in B_{\frac{1}{2}}$,  and 
\begin{align}
	F(0) < F(\gamma(t_2))  < F(\gamma(t_1)) < F(0) + \epsilon \, , 
\end{align}
then $\gamma$   has length at most $c\, [ F(\gamma(t_1)) - F(0)]^{\alpha} < \frac{1}{4}$. 
\end{Lem}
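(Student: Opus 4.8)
The plan is to exploit the classical Lojasiewicz differential-inequality argument, which converts the gradient inequality \eqr{e:gL} into a decay estimate for $F(\gamma(t)) - F(0)$ and, simultaneously, a bound on the length of $\gamma$. Set $f(t) = F(\gamma(t)) - F(0)$, which by hypothesis is positive and decreasing on $[t_1,t_2]$. The key is to consider the quantity $f^{1-\beta}$ for a suitable exponent $\beta = \frac{1+\tau}{2} \in (0,1)$, and to differentiate it along the flow.

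First I would compute, using \eqr{e:gLhere} and \eqr{e:gL},
\begin{align}
	-\frac{d}{dt} \, f^{1-\beta} = (1-\beta) \, f^{-\beta} \, |\nabla F|^2 \circ \gamma
	\geq (1-\beta) \, f^{-\beta} \, f^{1+\tau} = (1-\beta)\, f^{\,1+\tau-\beta} \, . \notag
\end{align}
With the choice $\beta = \frac{1+\tau}{2}$ one has $1 + \tau - \beta = \frac{1+\tau}{2} = \beta$, so the right-hand side is $(1-\beta)\, f^{\beta}$. On the other hand $|\gamma_t| = |\nabla F \circ \gamma|$, and combining \eqr{e:gLhere} with \eqr{e:gL} gives $|\gamma_t| = f^{-1/2} \, |\nabla F|^2 \circ \gamma \cdot f^{1/2}/|\nabla F\circ\gamma|$; more directly, $-\partial_t f = |\nabla F|^2 \circ \gamma = |\gamma_t|^2$, so $|\gamma_t| = \sqrt{-\partial_t f}$. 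The standard trick is to bound $|\gamma_t|$ by a constant multiple of $-\frac{d}{dt} f^{1-\beta}$: indeed
\begin{align}
	|\gamma_t| = f^{-\beta} \, |\nabla F|^2 \circ \gamma \cdot \frac{f^{\beta}}{|\nabla F \circ \gamma|}
	\leq f^{-\beta} \, |\nabla F|^2 \circ \gamma = \frac{1}{1-\beta} \left(-\frac{d}{dt} f^{1-\beta}\right) \, , \notag
\end{align}
where the inequality uses $|\nabla F \circ \gamma| \geq f^{(1+\tau)/2} = f^{\beta}$, which is exactly \eqr{e:gL}. Integrating from $t_1$ to $t_2$ then yields
\begin{align}
	\Length(\gamma) = \int_{t_1}^{t_2} |\gamma_t| \, dt
	\leq \frac{1}{1-\beta} \left[ f(t_1)^{1-\beta} - f(t_2)^{1-\beta} \right]
	\leq \frac{1}{1-\beta} \, f(t_1)^{1-\beta} \, , \notag
\end{align}
so the length bound holds with $\alpha = 1 - \beta = \frac{1-\tau}{2}$ and $c = \frac{1}{1-\beta}$.

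The remaining point is to verify that $\gamma$ actually stays in $B_2$ (where \eqr{e:gL} is valid) throughout $[t_1,t_2]$, since the differential inequality is only available there; this is where the hypothesis $\gamma(t_1) \in B_{1/2}$ and the smallness of $\epsilon$ enter. I would argue by a continuity/bootstrap method: as long as $\gamma$ remains in $B_2$, the length estimate above controls $\dist(\gamma(t), \gamma(t_1)) \leq \Length(\gamma|_{[t_1,t]}) \leq c\, f(t_1)^{\alpha} \leq c\, \epsilon^{\alpha}$, so by choosing $\epsilon$ small enough that $c\,\epsilon^{\alpha} < \frac{1}{4}$ one keeps $\gamma(t) \in B_{3/4} \subset B_2$, which prevents escape and closes the bootstrap. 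The main obstacle — and the reason $\tau > 1/3$ matters elsewhere in the paper — is precisely this confinement step: one must ensure the exponent $\alpha$ is positive and the total displacement is dominated by the correct power of the energy gap, so that the flow cannot leave the region where the Lojasiewicz inequality holds. Once confinement is secured, the length bound and the claimed inequality $c\,f(t_1)^{\alpha} < \frac14$ follow immediately by shrinking $\epsilon$.
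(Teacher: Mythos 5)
Your proof is correct, but it takes a genuinely different route from the paper's. You use the classical Lojasiewicz chain-rule trick: with $\beta = \frac{1+\tau}{2}$, the pointwise inequalities $|\gamma_t| = |\nabla F \circ \gamma|$ and $|\nabla F\circ\gamma| \geq f^{\beta}$ give $|\gamma_t| \leq \frac{1}{1-\beta}\left(-\frac{d}{dt} f^{1-\beta}\right)$, so the length telescopes to $\frac{2}{1-\tau}\, f(t_1)^{\frac{1-\tau}{2}}$, and the confinement bootstrap (identical in spirit to the paper's definition of the exit time $T$) closes the argument. The paper instead integrates $(f^{-\tau})' \geq \tau$ to get the explicit decay \eqr{e:ftau}, and then bounds the length by a weighted Cauchy--Schwarz inequality \eqr{e:fromcs} followed by integration by parts against the weight $(1+t)^{1+\delta}$. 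Your argument is shorter and gives a cleaner exponent, but it leans essentially on the pointwise identity $|\gamma_t|^2 = -\partial_t f$ combined with the pointwise gradient inequality; that structure is exactly what is \emph{unavailable} in the MCF application, where one only has the discrete Lojasiewicz inequality of Theorem \ref{t:gradMCF} relating $F(\Sigma_t)$ to the drop of $F$ over a unit time interval, and where lengths are controlled only through sums of the form \eqr{e:slightly} via \eqr{e:Lgamma}. The paper's seemingly roundabout proof is chosen precisely because it discretizes verbatim into Proposition \ref{p:tech} (Cauchy--Schwarz for sums, summation by parts with $b_j = j^{1+\delta}$, and the iterated bound \eqr{e:fromiterating} playing the role of \eqr{e:ftau}), which is the form actually used for rescaled MCF. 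One small inaccuracy in your closing remark: the restriction $\tau > 1/3$ has nothing to do with the confinement step (any $\tau \in (0,1)$ works there, for both your argument and the paper's); it is used in the discrete setting, in Lemma \ref{l:eleme} (to get $\tau \, 2^{-1-\tau} > \frac{1}{12}$) and in Proposition \ref{p:tech} (to ensure $\delta < 1$ so that $(j+1)^{1+\delta} - j^{1+\delta} \leq 2(j+1)^{\delta}$).
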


\begin{proof}
It is convenient to translate in time so that $t_1 = 0$.  Define $T \in (0 , t_2]$ by
\begin{align}
	T = \max \, \{ t \in (0, t_2] \, | \, \gamma (t) \in \overline{B_1} \} \, .
\end{align}
Given $t \in [0,T]$, 
set $f(t) = F(\gamma(t)) - F(0)$, so that $0 <f < \epsilon$, $f' = - |\nabla F|^2 (\gamma(t))$ and \eqr{e:gL} gives that
\begin{align}	\label{e:fromgL}
	f^{1+\tau} \leq - f' \, .
\end{align}
Since $f> 0$, $f^{-\tau}$ is well-defined and the chain rule gives 
\begin{align}
	\left( f^{-\tau} \right)' = - \tau \, f^{-1-\tau} \, f' \geq \tau \, .
\end{align}
Integrating this gives that $f^{-\tau} (t) \geq f^{-\tau} (0) + \tau \, t$ and, thus, 
\begin{align}	\label{e:ftau}
	f(t) \leq \left( f^{-\tau}(0) + \tau \, t \right)^{ - \frac{1}{\tau}} \, .
\end{align}
Since $\tau < 1$, we have that $\frac{1}{\tau} = 1 + 3 \, \delta$ with $\delta > 0$.  Let $L$ be the length of $\gamma$ restricted to $[0,T]$.  Since $\gamma$ is a gradient flow, using the Cauchy-Schwarz inequality gives
\begin{align}	\label{e:fromcs}
	L^2 &=    \left( \int_0^T \sqrt{-f'} \right)^2 \leq \left( \int_0^T (-f') \, (1+t)^{1+\delta} \right) \, \int_0^T (1+t)^{-1-\delta} \notag \\
	&\leq
	\frac{1}{\delta} \, \left( \int_0^T (-f') \, (1+t)^{1+\delta} \right) \, .
\end{align}
 Integrating by parts and using \eqr{e:ftau}  bounds the last integral by 
\begin{align}
	\int_0^T (-f') \, (1+t)^{1+\delta}  &= - [ f\, (1+t)^{1+\delta}]_0^T + (1+\delta) \,  \int_0^T f \, (1+t)^{\delta} \leq f (0) + 2\, \int_0^T f \, (1+t)^{\delta}  \notag \\
		&\leq f(0) + 2 \, \int_0^T  \left( f^{-\tau}(1) + \tau \, t \right)^{  -1 - 3\, \delta} \, (1+t)^{\delta}  \\
		&\leq f(0) + 2 \, (f(0))^{\delta \, \tau} \,  \int_0^{T}  \, \left(1 + \tau \, t \right)^{  -1 - 2\, \delta} \, (1+t)^{\delta} \leq c' \,[ f(0)]^{2\, \alpha} 
		\notag \, ,
\end{align}
where $c' , \alpha  > 0$ depend on $\delta$ but are independent of $T$.  Using this in \eqr{e:fromcs} gives that
\begin{align}
	L \leq c \, [f(0)]^{\alpha} \leq c \, \epsilon^{\alpha}\, .	\label{e:modelinside}
\end{align}
As long as $\epsilon > 0$ is small enough, then $\gamma$ must stay inside $B_{  \frac{3}{4}} \subset B_1$, so we conclude that $T = t_2$ and, thus, $L$ is the entire length of $\gamma$ on $[t_1 , t_2]$.
\end{proof}

The previous lemma is all that would be needed for the Lojasiewicz uniqueness theorem.  We turn next to the general case where $F \circ \gamma$ is allowed to go below $F(0)$.

\begin{proof}[Proof of Proposition \ref{p:model}]
Let $L = \Length (\gamma)$. We will consider three cases, depending on whether $F$ is above or below $F(0)$ along $\gamma$.   

\vskip1mm
\noindent
{\bf{Case 1}}: $F(\gamma(t_2) ) \geq F(0)$.  By continuity, we can assume that $F(\gamma(t_2) ) > F(0)$ and, thus, Lemma \ref{l:above} gives that
\begin{align}
	L \leq c\, [ F(\gamma(t_1)) - F(0)]^{\alpha}  \, .
\end{align}

\vskip1mm
\noindent
{\bf{Case 2}}: $F(0) \geq   F(\gamma(t_1) )$. Similarly to the first case, we can assume that $F(0) > F(\gamma(t_1))$.  In this case, we  reverse the 
parameterization{\footnote{The assumptions here are symmetric in $t_1$ and $t_2$, so case 2 can be reduced to case 1.  MCF is not symmetric under time reversal
and we will use a slightly different approach there.}}
  of $\gamma$ to get a curve $\tilde{\gamma} (t) = \gamma (-t)$ that is the gradient flow to $\tilde{F} = - F$.  The curve $\tilde{\gamma}$ has the same length as $\gamma$, but it  now satisfies the first case. Thus,  Lemma \ref{l:above}  gives that
\begin{align}
	L \leq c\, [ F(0) - F(\gamma(t_2))]^{\alpha}  \, .
\end{align}

\vskip1mm
\noindent
{\bf{Case 3}}: $F(\gamma(t_1) ) >  F(0) > F(\gamma(t_2) )$.  This follows by introducing a new endpoint $t' \in (t_1 , t_2)$ where $F(\gamma(t')) = F(0)$, 
and using the first case to bound the length of  $\gamma$ on $[t_1 , t']$ and the second case for the interval $[t',t_2]$.  Thus, we get that
\begin{align}
	L \leq c\, [ F(\gamma(t_1)) - F(0)]^{\alpha}  + c\, [ F(0) - F(\gamma(t_2))]^{\alpha}  \, .
\end{align} 
This completes the proof.

\end{proof}

\section{A discrete effective uniqueness theorem}

The Lojasiewicz gradient inequality led to an effective version of the Lojasiewicz uniqueness theorem for 
finite dimensional gradient flows. 
We will need a discrete version of this, where the derivative is replaced by a difference.   This is given in the next proposition which should be thought of as an effective version of lemma $6.9$ in \cite{CM2}.

\begin{Pro}	\label{p:tech}
Given $C \geq 1$ and $\tau \in (1/3 , 1)$, there exist constants $c , \alpha > 0$ so that if $x_j > 0$ is a non-increasing sequence with $x_1 \leq 1$ and
\begin{align}
	x_{j+1}^{1+\tau} \leq C \, (x_j - x_{j+1}) \, , 
\end{align}
then $\sum_j \left| x_j - x_{j+1} \right|^{\frac{1}{2}} \leq c \, x_1^{\alpha}$.
\end{Pro}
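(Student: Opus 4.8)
The plan is to run a discrete analogue of the proof of Lemma \ref{l:above}, replacing the decay estimate plus weighted Cauchy--Schwarz by a dyadic decomposition of the range of the sequence. Write $d_j = x_j - x_{j+1} \ge 0$; since $(x_j)$ is non-increasing and positive, the total drop is already controlled, $\sum_j d_j = x_1 - \lim_j x_j \le x_1 \le 1$, and the goal is to upgrade this to a bound on $\sum_j d_j^{1/2}$. The only tool is the hypothesis $d_j \ge C^{-1} x_{j+1}^{1+\tau}$, which forces each individual drop to be large unless $x_{j+1}$ is already small; this is exactly what makes the half-powers summable.

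First I would group the indices by the dyadic scale of $x_j$. For $m \ge 0$ set
\[
 I_m = \{\, j : 2^{-(m+1)} x_1 < x_j \le 2^{-m} x_1 \,\}.
\]
Because $(x_j)$ is non-increasing these are blocks of consecutive integers, they cover every index (as all $x_j \in (0,x_1]$), and telescoping gives $\sum_{j \in I_m} d_j \le 2^{-m} x_1$. Next I would bound the block size $N_m = |I_m|$. If $j \in I_m$ is not the last index of its block, then $j+1 \in I_m$, so $x_{j+1} > 2^{-(m+1)} x_1$ and the hypothesis yields $d_j \ge C^{-1} (2^{-(m+1)} x_1)^{1+\tau}$. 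Hence all but (at most) the single ``exit'' drop of the block are bounded below by this quantity, and comparing with the total drop gives
\[
 N_m \le 1 + C\, 2^{1+\tau}\, 2^{m\tau}\, x_1^{-\tau}.
\]
This is where the possible large drops get absorbed: at most one step per scale can leave the block with $x_{j+1}$ small, and that single exceptional term will sum to a harmless geometric series.

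Then, applying Cauchy--Schwarz within each block, $\sum_{j \in I_m} d_j^{1/2} \le N_m^{1/2}\,(\sum_{j \in I_m} d_j)^{1/2} \le N_m^{1/2} (2^{-m} x_1)^{1/2}$, and inserting the bound on $N_m$ together with $(a+b)^{1/2} \le a^{1/2} + b^{1/2}$ gives
\[
 \sum_{j \in I_m} d_j^{1/2} \le (2^{-m} x_1)^{1/2} + c\, 2^{-m(1-\tau)/2}\, x_1^{(1-\tau)/2}.
\]
Summing over $m \ge 0$, the first terms form a convergent geometric series bounded by $c\, x_1^{1/2}$, while the second terms converge precisely because $\tau < 1$, with sum $\le c\, x_1^{(1-\tau)/2}$. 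Since $x_1 \le 1$ and $\tfrac{1-\tau}{2} \le \tfrac12$ we have $x_1^{1/2} \le x_1^{(1-\tau)/2}$, so both contributions combine to $\sum_j d_j^{1/2} \le c\, x_1^{\alpha}$ with $\alpha = \tfrac{1-\tau}{2} > 0$.

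The main obstacle is the uniform control of $N_m$: a single step can cross many dyadic scales (a ``big drop''), so the hypothesis gives no useful lower bound on that one drop. Isolating it as the exit drop of its block, and checking that one such term per scale sums geometrically, is the only delicate point; everything else is the discrete counterpart of Lemma \ref{l:above}, and only $\tau < 1$ (not the sharper $\tau > 1/3$) is actually needed for this step.
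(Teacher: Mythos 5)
Your proof is correct, and it takes a genuinely different route from the paper's. The paper discretizes the continuous argument of Lemma \ref{l:above}: the calculus Lemma \ref{l:eleme} converts the hypothesis into a uniform gap $x_{j+1}^{-\tau} - x_j^{-\tau} > \frac{1}{12C}$, iteration gives the polynomial decay $x_{j+1} \leq \left( x_1^{-\tau} + \frac{j}{12C} \right)^{-1/\tau}$ (the discrete analogue of \eqr{e:ftau}), and the half-powers are then summed via Cauchy--Schwarz with weights $j^{1+\delta}$, where $\frac{1}{\tau} = 1 + 3\delta$, followed by summation by parts. You never establish a decay rate in $j$ at all: instead you stratify the indices by the dyadic scale of $x_j$, use the hypothesis only to count the steps in each band (correctly isolating the single ``exit'' drop per band, which is exactly what handles a drop crossing many scales), and then apply Cauchy--Schwarz band by band and sum geometric series. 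One small point you should make explicit: each block $I_m$ is automatically finite, since an infinite block would have every one of its drops bounded below by $C^{-1}(2^{-(m+1)}x_1)^{1+\tau} > 0$ while the total drop in the block is at most $2^{-m}x_1$; with that remark your counting bound on $N_m$ is rigorous. Your route buys three things: it is more elementary (no iteration lemma, no summation by parts); it works for every $\tau \in (0,1)$ with no lower bound on $\tau$, whereas the paper's argument needs $\tau$ bounded below so that $\delta < 1$ in the step $(j+1)^{1+\delta} - j^{1+\delta} \leq 2(j+1)^{\delta}$; and it produces the explicit and stronger exponent $\alpha = \frac{1-\tau}{2}$, versus the exponent of order $\frac{1-\tau}{6}$ that comes out of the paper's computation. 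What the paper's approach retains in exchange is the explicit rate at which $x_j$ itself decays, which mirrors the structure of the model case and of Lemma $6.9$ in \cite{CM2}.
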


\vskip1mm
To illustrate how this will be used, let $\gamma$ be a finite dimensional gradient flow as in the previous section.  If  we set $f= F \circ \gamma$ and
$x_j = f (j) - F(0)$, then the discrete version of \eqr{e:gLhere} says that
\begin{align}
	x_{j+1}^{1+\tau} \leq x_j - x_{j+1} \, .
\end{align}
Thus, Proposition \ref{p:tech} gives that $ \left| x_j - x_{j+1} \right|^{\frac{1}{2}} $ is summable,     bounding the length   by 
\eqr{e:Lgamma}.  We will use a  similar approach in the next section to get an effective uniqueness theorem for mean curvature flow.

\vskip1mm
We will use the following calculus lemma in the proof of Proposition \ref{p:tech}.

\begin{Lem}	\label{l:eleme}
Fix $C \geq 1$ and $\tau \in (1/3 , 1]$.  If $0<b < a \leq 1$ and $b^{1+\tau} \leq C \, (a-b)$, 
then 
\begin{align}	\label{e:eleme}
	b^{-\tau} - a^{-\tau} > \frac{1}{12\, C} \, .
\end{align}
\end{Lem}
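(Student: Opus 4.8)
The plan is to rewrite the hypothesis as a lower bound on the gap, $a - b \geq \frac{1}{C} \, b^{1+\tau}$, and then compare $b^{-\tau} - a^{-\tau}$ with this gap using the monotonicity of $x \mapsto x^{-\tau}$. Writing
\begin{align}
	b^{-\tau} - a^{-\tau} = \tau \, \int_b^a x^{-1-\tau} \, dx \geq \tau \, a^{-1-\tau} \, (a-b) \geq \frac{\tau}{C} \, a^{-1-\tau} \, b^{1+\tau} = \frac{\tau}{C} \, \left( \frac{b}{a} \right)^{1+\tau} \, , \notag
\end{align}
where the first inequality uses $x^{-1-\tau} \geq a^{-1-\tau}$ on $[b,a]$ and the second uses the hypothesis. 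This bound is effective precisely when $b/a$ is bounded away from $0$, which suggests splitting into two regimes according to the size of $b/a$.

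First I would treat the regime $b \geq \frac{a}{2}$. Here $\left( \frac{b}{a} \right)^{1+\tau} \geq \left( \frac12 \right)^{1+\tau} \geq \frac14$, since $1+\tau \leq 2$ and the base is less than $1$, so the displayed estimate gives $b^{-\tau} - a^{-\tau} \geq \frac{\tau}{4C}$. Because $\tau > \frac13$, this exceeds $\frac{1}{12C}$, which is exactly the desired conclusion (and the strict inequality $\tau > \frac13$ is what makes the bound strict).

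The remaining regime $b < \frac{a}{2}$ is where the ratio estimate degenerates, and this is the only genuinely delicate point; the resolution is that in this regime the two terms are individually well-separated rather than close. Here I would drop the subtraction entirely and bound $b^{-\tau} > \left( \frac{a}{2} \right)^{-\tau} = 2^{\tau} a^{-\tau}$, so that $b^{-\tau} - a^{-\tau} > (2^{\tau}-1) \, a^{-\tau} \geq 2^{\tau}-1$, using $a \leq 1$ to get $a^{-\tau} \geq 1$. Since $\tau > \frac13$ we have $2^{\tau} - 1 > 2^{1/3} - 1 > \frac{1}{12} \geq \frac{1}{12C}$, completing this case. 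The two regimes together cover all of $0 < b < a \leq 1$, so the stated inequality \eqr{e:eleme} holds in every case; the choice of threshold $\frac12$ is the natural one that makes both the ratio bound (via $\tau > \frac13$) and the separation bound (via $2^{1/3} - 1 > \frac{1}{12}$) close simultaneously.
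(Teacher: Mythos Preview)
Your proof is correct and essentially identical to the paper's: both split at $b = a/2$, use the integral bound $b^{-\tau}-a^{-\tau}\geq \frac{\tau}{C}(b/a)^{1+\tau}$ when $b\geq a/2$, and use the direct separation $b^{-\tau}-a^{-\tau}\geq 2^{\tau}-1>2^{1/3}-1>\tfrac{1}{12}$ when $b<a/2$. The only cosmetic difference is that in the first regime you bound $(1/2)^{1+\tau}\geq 1/4$ (using $\tau\leq 1$) and then $\tau/4>1/12$ (using $\tau>1/3$) in two steps, whereas the paper asserts $\tau\,2^{-1-\tau}>1/12$ directly.
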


\begin{proof}
We will consider two cases.  Suppose first that $2\,b \leq a$, so that 
\begin{align}
	b^{-\tau} - a^{-\tau}  \geq \left( \frac{a}{2} \right) ^{-\tau} - a^{-\tau} = \left(  2^{\tau} - 1 \right) \, a^{-\tau} \geq \left(  2^{\tau} - 1 \right) > 2^{ \frac{1}{3}} - 1 \, .
\end{align}
Since $2^{ \frac{1}{3}} - 1$ is greater than $\frac{1}{12}$ (and $C \geq 1$), we see that \eqr{e:eleme} holds in this case.

In the remaining case, we have that $a < 2 \, b$.  If we define $g(x) = x^{-\tau}$ for $x \in [b,a]$, then $g' = -\tau \, x^{-1-\tau}$ is increasing.  Therefore, the fundamental theorem of calculus gives that
\begin{align}	\label{e:fromFTC}
	a^{-\tau}  - b^{-\tau} = \int_b^a g'(x) \, dx \leq g'(a) \, (a-b) = -\tau  \, a^{-1-\tau} \, (a-b) \, .
\end{align}
Combining this with the assumption that $b^{1+\tau} \leq C \, (a-b)$, we see that
\begin{align}
	b^{-\tau} - a^{-\tau} \geq \tau \, a^{-1-\tau} \, (a-b) \geq \frac{\tau}{C}
		 \, \left( \frac{b}{a} \right)^{1+\tau} > \frac{\tau}{C} \, 2^{-1-\tau} \, ,
\end{align}
where the last inequality used that  $a < 2 \, b$. Since $\tau \in (1/3,1]$, we have that $\tau \, 2^{-1-\tau} > \frac{1}{12}$ and, thus, we get \eqr{e:eleme} in this case as well.
\end{proof}

\begin{proof}[Proof of Proposition \ref{p:tech}]
Lemma \ref{l:eleme} gives for each $j$ that
\begin{align}	\label{e:eleme2}
	x_{j+1}^{-\tau} - x_j^{-\tau} > \frac{1}{12\, C} \, .
\end{align}
Iterating this, we see that
\begin{align}	\label{e:fromiterating}
	x_{j+1}^{-\tau} > x_1^{-\tau} + \frac{j}{12\, C} \, .
\end{align}
Since $\tau \in (1/3,1)$, we have that $\frac{1}{\tau} = 1 + 3 \, \delta$ with $0< \delta < 1$.
The Cauchy-Schwarz inequality (for sums) gives that
\begin{align}	\label{e:inhere}
	\left( \sum_{j=1}^N \left| x_j - x_{j+1} \right|^{\frac{1}{2}} \right)^2 \leq 
	\left( \sum_{j=1}^N ( x_j - x_{j+1}) \, j^{1+\delta} \right) \,  \sum_{j=1}^N  j^{-1-\delta} <
	\frac{2}{\delta} \, \sum_{j=1}^N ( x_j - x_{j+1}) \, j^{1+\delta} \, , 
\end{align}
where the last inequality used that 
\begin{align}
	\sum_{j=1}^{\infty} j^{-1-\delta} < 1 + \int_1^{\infty} x^{-1-\delta}\, dx = 1 + \frac{1}{\delta}= \frac{\delta +1 }{\delta} < \frac{2}{\delta} \, .
\end{align}
Using  the summation by parts formula
 \begin{align}
 	\sum_{j=1}^N b_j \, (x_{j+1} - x_{j}) = \left[  b_{N+1} x_{N+1} - b_1 x_1 \right] -  \sum_{j=1}^N x_{j+1} \, (  b_{j+1}-b_j )  
 \end{align}
 with  $b_j = j^{1+\delta}$, we see that
  \begin{align}
 	\sum_{j=1}^N j^{1+\delta} \, (x_j - x_{j+1} ) \leq  x_1 +   \sum_{j=1}^N x_{j+1} \, ( (j+1)^{1+\delta} -j^{1+\delta})  \leq x_1 + 
	2 \, \sum_{j=1}^N x_{j+1} \,  (j+1)^{\delta} \, , 
 \end{align}
where the last inequality used that  $[ (j+1)^p - j^p] \leq 2 \, (j+1)^{p-1}$ for any $p \in (1,2)$.
Inserting the bound \eqr{e:fromiterating}, this becomes
  \begin{align}	\label{e:midway}
 	\sum_{j=1}^N j^{1+\delta} \, (x_j - x_{j+1} ) \leq    x_1 + 
	2 \, \sum_{j=1}^N   (j+1)^{\delta} \, \left( 
	x_1^{-\tau} + \frac{j}{12\, C}
	\right)^{ -1-3\, \delta} \, .
 \end{align}
  Since $x_1 < 1$, we have that 
  \begin{align}	 
 	 \sum_{j=1}^N   (j+1)^{\delta} \, \left( 
	x_1^{-\tau} + \frac{j}{12\, C}
	\right)^{ -1-3\, \delta} &=  (12\, C)^{\delta} \, \sum_{j=1}^N   \left( \frac{j+1}{12\,C} \right)^{\delta} \, \left( 
	x_1^{-\tau} + \frac{j}{12\, C}
	\right)^{ -1-3\, \delta} \notag \\
	&
	\leq    (12\, C)^{\delta} \, \sum_{j=1}^N    \left( 
	x_1^{-\tau} + \frac{j}{12\, C}
	\right)^{ -1-2\, \delta}   \\
	&
	\leq    (12\, C)^{\delta} \, x_1^{ \tau \, \delta} \,  \sum_{j=1}^N    \left( 
	1 + \frac{j}{12\, C}
	\right)^{ -1-  \delta}
	 \, , \notag
 \end{align}
 where the last inequality used that $x_1 < 1$.
 The last sum is uniformly bounded independent of $N$, 
so we see that
  \begin{align}	 
 	\sum_{j=1}^N j^{1+\delta} \, (x_j - x_{j+1} )   & \leq x_1 + c' \, x_1^{\alpha'} \, , 
 \end{align}
 where $c'=c'(C,\tau)$ and $\alpha' = \alpha' (\tau) > 0$.  Combining this with  \eqr{e:inhere} gives the claim.
\end{proof}

\section{Applications to mean curvature flow}

In this section, we will prove the effective uniqueness theorem for rescaled mean curvature flow and apply it to prove Theorem \ref{t:AB}.
 The   idea is that if a flow is initially close to a cylinder  $\cC$ and its Gaussian density does not change much, then it remains close to $\cC$.
A version of this was used in \cite{CM4} to prove a stratification theorem and sharp estimates for the size of the singular set.   Recall that there is a fixed entropy bound $\lambda_0$ throughout.{\footnote{There are generalizations to the case of a local entropy bound, cf. \cite{S}.}}

\subsection{The Lojasiewicz inequality}

The next theorem from \cite{CM2}   gives a discrete version of a gradient Lojasiewicz-type inequality for  rescaled MCF{\footnote{There are two small differences in the statement here versus in \cite{CM2}.  The first is that \cite{CM2} assumes an entropy bound, while that is assumed everywhere here.  Second, \cite{CM2} was aimed at uniqueness of tangent flows, so there is an implicit assumption there that $F$ is above the cylinder.  That is however not used in the proof there (the first line of the proof is for the absolute values of the difference).}}.  
In the theorem,  $\cC=\SS^k_{\sqrt{2k}} \times \RR^{n-k}$ is a shrinking cylinder.

\begin{Thm}	\label{t:gradMCF}
(Theorem $6.1$ in \cite{CM2}) There exist constants $ C, \bar{R} , \epsilon$ and $\tau \in (1/3,1)$ so that if 
  $\Sigma_s$ is a rescaled MCF for $s \in [t-1, t+1]$  with $\dist_{\bar{R}} ( \Sigma_s , \cC) < \epsilon$ for each $s$, then
\begin{align}	\label{e:thegmcf}
	\left|F(\Sigma_t)-F(\cC) \right|^{1+\tau} &\leq C \, \left(F(\Sigma_{t-1})-F(\Sigma_{t+1})\right) \, .
\end{align}
\end{Thm}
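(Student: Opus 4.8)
The plan is to deduce the discrete inequality \eqr{e:thegmcf} by integrating a pointwise-in-time gradient Lojasiewicz inequality, exactly as in the finite-dimensional passage illustrated above. Since the rescaled MCF is the gradient flow of $F$ in the weighted $L^2$ inner product with weight $\e^{-\frac{|x|^2}{4}}$, one has $-\partial_s F(\Sigma_s)=\|\phi\|^2$, where $\phi=H-\tfrac12\langle x,\nn\rangle$ and $\|\cdot\|$ is the weighted $L^2$ norm; this $\|\phi\|^2$ is precisely the squared $F$-gradient. The analytic core is therefore the continuous inequality
\begin{align}
  |F(\Sigma_s)-F(\cC)|^{1+\tau} \leq C \, \|\phi\|^2 = -\,C\,\partial_s F(\Sigma_s) \, , \notag
\end{align}
the infinite-dimensional analogue of \eqr{e:gL}, valid whenever $\dist_{\bar R}(\Sigma_s,\cC)<\epsilon$. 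Granting it, I would integrate over $[t-1,t+1]$ and use $F(\Sigma_{t-1})-F(\Sigma_{t+1})=\int_{t-1}^{t+1}\|\phi\|^2$; since $F$ is monotone non-increasing along the flow, on whichever half-interval keeps $F$ on the side of $F(\cC)$ away from $F(\Sigma_t)$ the integrand is at least $|F(\Sigma_t)-F(\cC)|^{1+\tau}$, so the time-integral dominates $|F(\Sigma_t)-F(\cC)|^{1+\tau}$ and \eqr{e:thegmcf} follows.

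To prove the continuous inequality, I would first use the closeness hypothesis to write $\Sigma_s\cap B_{\bar R}$ as a normal graph of a small function $u$ over $\cC$ and Taylor expand $F$. Because $\cC$ is a critical point, the linear term drops and $F(\Sigma_s)-F(\cC)=\tfrac12\langle Lu,u\rangle+O(\|u\|^3)$, where $L=\Delta-\tfrac12\langle x,\nabla\rangle+|A|^2+\tfrac12$ is the self-adjoint weighted Jacobi operator of $\cC$. Next I would analyze the spectrum of $L$: on the fixed-radius factor $\SS^k_{\sqrt{2k}}$ the spectrum is discrete, the finitely many unstable modes are explicit, and the neutral kernel $K$ reflects the symmetries of $F$ preserving $F(\cC)$ — translations along the $\RR^{n-k}$ axis and rotations of $\RR^{n+1}$ moving the axis — together with the degenerate directions special to the cylinder. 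A Lyapunov--Schmidt reduction then splits $u=u_0+u_\perp$ with $u_0\in K$: the spectral gap of $L$ on $K^\perp$ gives coercivity $|\langle Lu_\perp,u_\perp\rangle|\gtrsim\|u_\perp\|^2$ and hence a strong Lojasiewicz bound there, so the only obstruction is the finite-dimensional projection $u_0$. Because the quadratic form degenerates on $K$, one must retain the cubic term of the expansion; $F$ is real-analytic along $K$ and, for the cylinder, vanishes there to at least cubic order, and this is precisely what forces the finite-dimensional Lojasiewicz inequality \eqr{e:gL} to hold with an exponent in the range $\tau\in(1/3,1)$ — the same range required by the model Lemma \ref{l:eleme}.

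The main obstacle, and the reason Simon's Lojasiewicz--Simon scheme does not transfer verbatim, is the non-compactness of the time-slices coming from the $\RR^{n-k}$ factor. Two points require care. First, $u$ is controlled only on $B_{\bar R}$, so one must show the contribution of $\{|x|>\bar R\}$ to both $F$ and $\|\phi\|^2$ is negligible; this is where the Gaussian weight $\e^{-\frac{|x|^2}{4}}$ and the a priori entropy bound $\lambda_0$ enter, letting interior closeness on $B_{\bar R}$ govern the global quantities once $\bar R$ is large and $\epsilon$ small (depending only on $n$ and $\lambda_0$). Second, the neutral modes tied to axis-translations and axis-moving rotations are neither compactly supported nor decaying, so the reduction to a genuinely finite-dimensional kernel cannot use compactness and instead rests on weighted Schauder and $L^2$ estimates for $L$; carrying out this reduction while keeping the exponent inside the interval $(1/3,1)$ demanded by Lemma \ref{l:eleme} and Proposition \ref{p:tech} is the delicate quantitative heart of the argument.
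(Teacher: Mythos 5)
First, a point of comparison: this paper does not prove Theorem \ref{t:gradMCF} at all --- it is quoted verbatim from Theorem 6.1 of \cite{CM2}, with a footnote explaining the only two (cosmetic) differences: the entropy bound is a standing assumption here, and the absolute value on the left of \eqr{e:thegmcf} is harmless because the proof in \cite{CM2} already establishes that form. So your proposal has to be measured against the proof in \cite{CM2}, and there it diverges fundamentally. You propose the classical Lojasiewicz--Simon scheme: write $\Sigma$ as a graph over $\cC$, Taylor expand $F$, perform a Lyapunov--Schmidt reduction onto $\Ker L$, use a spectral gap on the orthogonal complement, and invoke analyticity/cubic vanishing of the reduced functional on the kernel. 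But this is precisely the route that the introduction of this paper (and of \cite{CM2}) says does \emph{not} apply: ``those methods did not apply for a number of reasons, including the non-compactness of the time-slices. Instead new ideas and techniques were required.'' The actual proof in \cite{CM2} is direct: explicit estimates for the linearized operator plus interpolation inequalities, which produce the exponent $\tau$ explicitly, with no reduction to a finite-dimensional analytic functional and no use of analyticity, combined with flow-based extension theorems.

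The concrete gap is in your treatment of non-compactness, which you flag but then dispose of incorrectly. Your core claim is a \emph{static} pointwise inequality $|F(\Sigma_s)-F(\cC)|^{1+\tau}\leq C\,\|\phi\|^2$ valid whenever $\dist_{\bar{R}}(\Sigma_s,\cC)<\epsilon$, with $\bar{R}$ and $\epsilon$ \emph{fixed} (``depending only on $n$ and $\lambda_0$''). With a fixed graphical radius, the part of $\Sigma_s$ outside $B_{\bar{R}}$ --- over which you have no control beyond the entropy bound --- contributes terms of size roughly $\lambda_0\, \bar{R}^n \e^{-\bar{R}^2/4}$ to the comparison between the global quantities $F(\Sigma_s)$, $\|\phi\|^2$ and their graphical approximations. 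These errors are a \emph{fixed constant}: they are not higher order in $\|u\|_{C^{2,\alpha}(B_{\bar{R}})}$ or in $\|\phi\|$, whereas the theorem is applied exactly in the regime where both sides of \eqr{e:thegmcf} tend to zero, so fixed additive errors cannot be absorbed into either side. This is why in \cite{CM2} the static Lojasiewicz-type inequalities carry explicit Gaussian-tail error terms and are applied at a scale $R$ chosen so large (roughly $\e^{-cR^2}$ comparable to the drop $F(\Sigma_{t-1})-F(\Sigma_{t+1})$) that those tails are dominated; and the graphical control out to that growing scale is obtained from the \emph{flow} on $[t-1,t+1]$, via White's Brakke-type curvature estimates and the extension theorems of \cite{CM2}. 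In other words, the flow hypothesis is essential for much more than the identity $F(\Sigma_{t-1})-F(\Sigma_{t+1})=\int\|\phi\|^2$, which is the only place your proposal uses it. Secondary issues: the analyticity and cubic vanishing of the reduced functional along $\Ker L$ is asserted, not proved --- in the Gaussian-weighted non-compact setting this is exactly the unavailable ingredient --- and $\Ker L$ on $\cC=\SS^k_{\sqrt{2k}}\times\RR^{n-k}$ contains non-geometric modes (quadratic Hermite polynomials in the axis variables), so the kernel cannot be handled by symmetry considerations. Your final integration step deducing the discrete inequality from the pointwise one (using monotonicity of $F$ and the appropriate half-interval) is correct, but all of the difficulty has been deferred to a pointwise inequality whose proposed proof follows the scheme that is known to fail here.
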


 \vskip1mm
 In the extreme case where $F$ does not change, 
   this theorem  says that  any static solution    that is sufficiently  close to a cylinder on a large set, must have the same gaussian area as the cylinder, cf. \cite{CIM,CM3}.  A similar statement holds in higher codimension, \cite{CM5}; cf. \cite{Z} and see \cite{CS} for asymptotically conical shrinkers.

  \subsection{Staying close to the cylinder}
  
 To keep applying the Lojasiewicz inequality, we must show that the gradient flow does not move very far from the critical point.  This was clear in the model case that directly  bounds the length of the curve (see the discussion after \eqr{e:modelinside}).  This is more subtle now since the natural bound that we get is in $L^1$, but the closeness that is required is as a $C^{2,\alpha}$ graph in a fixed ball.     This is summarized in the following lemma:
  
  \begin{Lem}	\label{l:promote}
  Given $\epsilon > 0$ and $\bar{R} \geq 2n$, there exist $\tilde{C} , \epsilon_1 ,  \mu > 0$ and $ R > \bar{R}$ so that if $\dist_R ( \Sigma_t , \cC)  \leq \epsilon_1$ for $t \in [t_1  , t_1 + 2]$ and 
  \begin{align}	\label{e:andthis}
  	\sum_{j=0}^N \left( F (\Sigma_{t_1+j}) - F (\Sigma_{t_1 + j +1}) \right)^{ \frac{1}{2}} \leq \mu \, , 
  \end{align}
  then $\dist_{\bar{R}} ( \Sigma_{t}, \cC)  \leq \tilde{C} \, \mu  $ for $t \in [t_1 + 1, t_1 + N+1]$
  and $\dist_{\bar{R}} ( \Sigma_{t}, \cC)  \leq   \epsilon$ for $t \in [t_1  , t_1 + N+3]$.
  \end{Lem}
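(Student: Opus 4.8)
The plan is to turn the hypothesis \eqr{e:andthis} --- a summed, essentially $L^1$-in-time control on the speed of the flow --- into pointwise-in-time $C^{2,\alpha}$ closeness on the fixed ball $B_{\bar R}$, and then to run a continuity argument in time that is organized so that the resulting estimate never degrades as the elapsed time $N$ grows. This last point (uniformity in $N$) is the whole content of the lemma, so every estimate must be made per unit interval and then summed, rather than integrated over the full window at once.

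First I would record the energy identity for rescaled MCF: up to a dimensional constant, $\partial_s F(\Sigma_s) = -\int_{\Sigma_s} \phi^2\, \e^{-|x|^2/4}\, d\mu$, where $\phi = H - \frac{1}{2}\langle x, \nn\rangle$ is the normal speed, so that $-\partial_s F$ is exactly the squared weighted $L^2$-norm of the velocity. Just as in \eqr{e:Lgamma}, Cauchy--Schwarz on each unit interval gives $\int_{t_1+j}^{t_1+j+1} \|\phi\|_{L^2_w}\, ds \leq (F(\Sigma_{t_1+j}) - F(\Sigma_{t_1+j+1}))^{1/2}$, and summing over $j$ together with \eqr{e:andthis} bounds the total weighted-$L^2$ distance travelled on $[t_1, t_1+N+1]$ by $\mu$. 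Writing $\Sigma_s$ as a graph $u_s$ over $\cC$ while it stays graphical, this reads $\|u_t - u_{t_1+1}\|_{L^2_w(B_R)} \leq \tilde C\, \mu$ for each $t$ in the window. This is the weak ($L^1$-in-time, $L^2$-in-space) bound, which by itself controls only a weak norm of the displacement.

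Next I would upgrade this to the required $C^{2,\alpha}$ control by interior parabolic regularity. As long as the flow stays an $\epsilon_1$-graph over $\cC$ on the larger ball $B_R$, the graph function $u$ solves a uniformly parabolic quasilinear equation with smooth, bounded coefficients, so parabolic Schauder bootstrapping gives a priori bounds $\|u_s\|_{C^m(B_{R'})} \leq C(m, \epsilon_1)$ on intermediate balls $\bar R < R' < R$ at times at least one unit past $t_1$; interpolating these higher-order bounds against the weak displacement bound then controls the change $\|u_t - u_{t_1+1}\|_{C^{2,\alpha}(B_{\bar R})}$ by $\tilde C\, \mu$ (more precisely, by a fixed positive power of $\mu$, which is all that is needed). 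Because the smoothing is applied interval by interval and the weak bound has already been summed to $\mu$, this upgrade picks up no factor of $N$. This is exactly why the sharp conclusion is stated only on the interior window $[t_1+1, t_1+N+1]$: one unit inside the initial time so that smoothing can act, and inside the range over which the speed is summed. Together with the initial $\epsilon_1$-closeness of the reference slice $\Sigma_{t_1+1}$, this non-movement yields the stated distance bounds to $\cC$.

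Finally I would close the argument by an open--closed continuity argument in $t$. Let $T$ be the largest time up to which $\Sigma_s$ remains a $\frac{1}{2}\epsilon_1$-graph over $\cC$ on an intermediate ball; the hypothesis gives $T \geq t_1+2$. On $[t_1, T]$ all of the above estimates are valid, so choosing $\mu$ and $\epsilon_1$ small (depending on $\epsilon$, $\bar R$, $R$) forces $\dist_{\bar R}(\Sigma_t, \cC)$ to be strictly below the threshold defining $T$, even on a slightly larger ball, so $T$ can be advanced unless it has already reached $t_1+N+3$. This produces the $\epsilon$-closeness on the full window and the sharp $\tilde C\mu$ bound on the interior window, with the two units of buffer at the ends providing exactly the room that parabolic smoothing needs and that lets Theorem \ref{t:gradMCF} be reapplied on $[t-1, t+1]$ afterwards. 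The main obstacle is precisely this $L^1 \to C^{2,\alpha}$ gap: the displacement is available only in a weak, integrated norm, whereas staying in the graphical regime where the estimates hold requires strong closeness in a fixed ball, so the weak-to-strong upgrade and the continuity argument must be interleaved, with the radii $\bar R < R' < R$ and the time buffers balanced so that no constant depends on the elapsed time $N$.
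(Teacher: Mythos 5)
Your overall strategy (the energy identity plus Cauchy--Schwarz per unit interval to convert \eqr{e:andthis} into a summed weighted-$L^2$ displacement bound of size $\mu$, then a weak-to-strong upgrade by parabolic estimates, then propagation in time) is the same skeleton as the paper's proof, and your first step is exactly the paper's computation. The gap is in the mechanism you use for the a priori regularity and in the continuity argument built on it. You derive Schauder estimates for the graph of $\Sigma_t$ over $\cC$ purely in the rescaled picture, with a fixed nesting $\bar R < R' < R$: interior parabolic estimates need data on the larger ball at earlier times and yield conclusions only on the strictly smaller ball. But your open--closed argument must keep the flow $\epsilon_1$-graphical on the \emph{larger} ball $B_R$ at times $t > t_1+2$ in order to keep those estimates running, while everything you can propagate (the displacement bound, the improved closeness) lives on $B_{R'}$ or $B_{\bar R}$; the annulus $B_R \setminus B_{R'}$ is uncontrolled once you leave the initial window $[t_1, t_1+2]$, so $T$ cannot be advanced. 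Nor can you fix this by shrinking the per-step radius loss: the interior constants blow up as the margin shrinks, so the estimates would degrade with the number of steps, i.e., with $N$ --- exactly the dependence the lemma must avoid.

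The paper supplies precisely the missing mechanism: the a priori curvature bounds come from White's Brakke-type local regularity theorem \cite{W1} applied to the corresponding \emph{un-rescaled} MCF (using the entropy bound and the initial closeness), giving interior estimates \emph{forward in time} on the MCF. When these are translated back to the rescaled flow, the exponential space dilation means the estimate holds on a definitely \emph{larger} spatial scale than the region one started from (this is the content of the footnote in the paper's proof, following page 268 of \cite{CM2}). This gain of scale per step is what allows the argument to be repeated over arbitrarily many unit intervals with no loss of spatial region, which is the whole content of the uniformity in $N$. Without this device, or an equivalent one, your interleaved Schauder/continuity scheme does not close.
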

  
  \begin{proof}
  This follows as in $(1)$ on page $268$ of \cite{CM2}.   Since we have an entropy bound and initial closeness to the cylinder, 
  the Brakke estimate, \cite{W1},   gives local a
  priori curvature bounds{\footnote{The Brakke estimate is applied to the
   corresponding MCF, giving an interior estimate forward in time on the MCF. Translating this back to the rescaled MCF, the estimate  now holds on a definite larger scale (because of the scaling of the flow).  This is why there is no loss on the spatial region when the argument is repeated. See page $268$ in \cite{CM2}, cf. \cite{CIM} for the static case.}}.    
  We use the entropy bound, the Cauchy-Schwarz inequality and \eqr{e:andthis}
  to get that
   \begin{align}	\label{e:andthis}
  	\left( 4\, \pi \right)^{ - \frac{n}{2}} \, \sum_{j=0}^N  \int_{t_1+j}^{t_1 + j+1}\,  \int_{\Sigma_t}  |x_t| \, \e^{- \frac{|x|^2}{4}}   & \leq C \, \left( 4\, \pi \right)^{ - \frac{n}{2}} \,  \sum_{j=0}^N \left( \int_{t_1 +j}^{t_1 + j+1}\,  \int_{\Sigma_t}  |x_t|^2 \, \e^{- \frac{|x|^2}{4}}  \right)^{ \frac{1}{2} } \notag \\
	&= C \, 
	\sum_{j=0}^N \left( F (\Sigma_{t_1+j}) - F (\Sigma_{t_1 + j +1}) \right)^{ \frac{1}{2}} < C\, \mu \, , 
  \end{align}
  Combining the  curvature bound  with parabolic estimates and the above $L^1$ bound gives  higher derivatives as desired.
  Repeating this and using the uniform bound from \eqr{e:andthis}
gives the first claim.  The second claim follows from the first.
  \end{proof}
  
  The bound on the distance in the lemma is sufficient for the applications here.  It is possible to obtain finer estimates and structure near a cylinder, cf.
  \cite{CCMS,CHH,G,HK} and \cite{CIKS} for Ricci flow.

\subsection{Proofs of the main results}

 We are now ready to prove the effective uniqueness theorem for rescaled mean curvature flow.  The proof is modeled on the proof of 
 Proposition \ref{p:model} in the model case, with the same three cases depending on the range of $F$.{\footnote{One difference  is that time-reversal is not allowed  for MCF (the regularity is destroyed), so the case where $F$ is below $F(\cC)$ is not equivalent to the   case where it is above.}}

\begin{proof}[Proof of Theorem \ref{t:close}]
Let $C , \bar{R} > 1$,  $\epsilon > 0$ and $ \tau \in (1/3 , 1)$ be given by Theorem \ref{t:gradMCF}.  Then let $R > \bar{R}$ and $ \epsilon_1 , \mu > 0$ be given by
Lemma \ref{l:promote}.
We will consider three cases.  

\vskip1mm
\noindent
{\bf{Case 1}}: $F(\Sigma_{t_2} ) \geq F (\cC)$.  By continuity and a limiting argument, we can assume that 
$F(\Sigma_{t_2} ) > F (\cC)$. Define a sequence $x_j > 0$ by
\begin{align}
	x_j = F(\Sigma_{t_1 + 2j -1} ) - F (\cC) \, .
\end{align}
This sequence is non-increasing since $\Sigma_t$ is the gradient flow for $F$.  Define   $N$ to be the largest integer with 
\begin{align}
	 \sum_{j=1}^n |x_j - x_{j+1}|^{\frac{1}{2}} \leq \mu \, .
\end{align}
It follows from Lemma \ref{l:promote} and Theorem \ref{t:gradMCF} (and the monotonicity of $F$) that for $j \leq N$
\begin{align}	\label{e:herett}
	x_{j+1}^{1+\tau} \leq C \, (x_j - x_{j+1}) \, .
\end{align}
Therefore, Proposition \ref{p:tech}
 gives $c , \alpha > 0$ so that  
\begin{align}	\label{e:gotcha}
	\sum_{j\leq N}\,  \left| x_j - x_{j+1} \right|^{\frac{1}{2}} \leq c \, x_1^{\alpha} \, .
\end{align}
As long as $x_1$ is small enough, this is below $\mu$ and we see that $N$ takes us all the way to $t_2$.  Combining \eqr{e:gotcha} with 
  Lemma \ref{l:promote}  gives the claim.

 \vskip1mm
 \noindent
{\bf{Case 2}}: $F(\cC) \geq F(\Sigma_{t_1}) $.  By continuity and a limiting argument, we can assume that 
$F(\cC) > F(\Sigma_{t_2} ) $.   Define a sequence $y_j < 0$ by
\begin{align}
	y_j = F(\Sigma_{t_1 + 2j -1} ) - F (\cC) \, .
\end{align}
Note that the negative sign on the $y_j$'s means that $|F(\Sigma_t) - F(\cC)|$ is non-decreasing, which is the opposite of what we had in case 1.
Define   $N$ to be the largest integer with 
\begin{align}
	 \sum_{j=1}^n |y_j - y_{j+1}|^{\frac{1}{2}} \leq \mu \, .
\end{align}
It follows from Lemma \ref{l:promote} and Theorem \ref{t:gradMCF}   that for $j \leq N$
\begin{align}	\label{e:herett2}
	|y_{j}|^{1+\tau} \leq   C \, (y_j - y_{j+1}) \, ,
\end{align}
where the negative sign on $y_j$ is why we bound $|y_{j}|^{1+\tau}$ instead of $|y_{j+1}|^{1+\tau}$.  

We now want to work backwards, so we set $x_j = -y_{N-j}$.  This makes the $x_j$'s positive, non-increasing, and gives us \eqr{e:herett} (in place of \eqr{e:herett2}).
As in case 1,  Proposition \ref{p:tech}
 gives $c , \alpha > 0$ so that  
\begin{align}	\label{e:gotcha2}
	\sum_{j\leq N}\,  \left| x_j - x_{j+1} \right|^{\frac{1}{2}} \leq c \, x_1^{\alpha} \, .
\end{align}
As long as $x_1$ is small enough, this is below $\mu$ and we see that $N$ takes us all the way to $t_2$.  Combining \eqr{e:gotcha2} with 
  Lemma \ref{l:promote}  gives the claim.

 \vskip1mm
 \noindent
{\bf{Case 3}}: $  F(\Sigma_{t_1} ) > F(\cC) >  F(\Sigma_{t_2}) $.    This time we divide the sequence $F(\Sigma_{t_1 + 2j -1} ) - F (\cC) $ into two sequences (the positive ones, then the negative ones) and argue as in the two previous cases on each part.

\end{proof}

\subsection{Applications}

A MCF $M_s$ gives a
 rescaled MCF   $\Sigma_t$ is 
 given  by setting $\Sigma_t=\frac{1}{\sqrt{-s}}M_s$,
$t=-\log(-s)$, $s<0$.

Using the theorem, we can now prove Theorem \ref{t:AB}:

\begin{proof}[Proof of Theorem \ref{t:AB}]
For each $i$, define a rescaled MCF $\Sigma^i$ centered at $(x_i , t_i)$ by
\begin{align}
	\Sigma^i_t = \e^{ \frac{t}{2}} \, \left( M_{s_i - \e^{-t}} - x_i \right) \, .
\end{align}
Set $T_i= - 2 \, \log \mu_i$, so that $T_i \to \infty$ since $\mu_i \to 0$.

By the assumption (B), we have that $\Sigma^i_{T_i}  = \frac{1}{\mu_i} \,  \left( M_{s_i - \mu_i^2} - x_i \right)$ satisfies
\begin{align}
	\Sigma^i_{T_i}   &\to \cO(\cC) \, , \\
	F( \Sigma^i_{T_i}) &\to F(\cC) \, .
\end{align}
Furthermore, since $x_i \to 0$ and $s_i \to 0$ and  the origin is a cylindrical singularity, given any $\delta_1 , R > 0$,  we can choose $i'$ and $t_0 > 0$ so that
\begin{align}
 \dist_R (\Sigma^i_{t} , \cC ) &<\delta_1 \, , \\
 \left| F( (\Sigma^i_{t} ) - F(\cC) \right| &< \delta_1 
 \end{align}
   for $t \in [t_0 - 1, t_0+2]$ for all $i \geq i'$.
Thus, we can apply Theorem \ref{t:close} to get that the distance from $\cC$ to $\cO(\cC)$ is arbitrarily small, completing the proof.
\end{proof}

  It is well-known   that the uniqueness of cylindrical blow downs is a consequence of  \cite{CM2}.  It also 
  follows easily from Theorem \ref{t:close}. 
  
When $M_s$ is an ancient MCF with bounded entropy,  a blow down is a limit of a sequence of rescaled flows $M_s^i = \lambda_i^{-1} \, M_{\lambda_i^2 \, s}$ with $\lambda_i \to \infty$.
It follows from Huisken's monotonicity \cite{H}, that the blow down must be self-similarly shrinking.  If one blow down (i.e., a limit for some sequence $\lambda_i \to \infty$) is cylindrical, then we can construct rescaled MCF's as in the proof of Theorem \ref{t:AB} that start close to a cylinder and so that the $F$ functional stays close to $F(\cC)$.  Theorem \ref{t:close} then gives that this rescaled flow is almost a fixed point on the entire stretch and, thus, every blow down is the same cylinder.

\end{document}